\documentclass{amsart}
\usepackage{amssymb}
\usepackage{amsthm}
\usepackage[utf8]{inputenc}
\usepackage{epsfig}
\usepackage{graphicx}
\usepackage{epstopdf}
\usepackage[usenames]{color}
\usepackage{xcolor}

\newcommand\mut[1]{\ignorespaces}

\newtheorem{thm}{Theorem}
\newtheorem{lem}[thm]{Lemma}
\newtheorem{remark}[thm]{Remark}
\newtheorem{cor}[thm]{Colorally}

\newtheorem{con}[thm]{Conjecture}

\newtheorem{teo}{Teorema}
\newtheorem{lema}[teo]{Lema}

\def\Syl{\mathrm{Syl}}
\def\SL{\mathrm{SL}}
\def\AGL{\mathrm{AGL}}

\def\N{\mathbb{N}}
\def\Z{\mathbb{Z}}

\renewcommand\paragraph[1]{\subsection*{#1}}

\title[Sylow numbers and Mersenne primes]{The Number of Sylow subgroups and a generalization of Mersenne primes}

\author[J. Urroz]{Jorge Urroz}
\address{Departamento de Matem\'aticas e Informática Aplicadas a las Ingenierías Civil y Naval, E.T.S. de Ingeniería de Caminos, Canales y Puertos, Universidad Politécnica de Madrid. Calle Prof. Aranguren 3, 28040 Madrid, Spain.}
\email{jorge.urroz@upm.es}

\author{Alexander Moret\'o}
\address{Departament de Matem\`atiques, Universitat de Val\`encia, 46100
  Burjassot, Val\`encia, Spain}
\email{alexander.moreto@uv.es}

\thanks{The second author was supported by Ministerio de Ciencia e 
Innovaci\'on (Grant  PID2022-137612NB-I00 funded by 
MCIN/AEI/10.13039/501100011033 and ``ERDF A way of making Europe").}

\keywords{Sylow subgroup, Sylow number, prime, repunit}

\subjclass[2020]{Primary 20D20, 11A41, Secondary 11N05,  20D05}

\date{\today}

\begin{document}

\begin{abstract}
Fix an integer $m>2$. We prove that if there exists a finite group with $mp+1$ Sylow $p$-subgroups, where $p$ is large enough, then $mp+1$ is prime. This improves on a theorem of M. Hall and  is a partial answer to Brauer's Problem 26. Our proof uses techniques from analytic number theory, and it also raises new questions in that area. 
\end{abstract}

\maketitle

\section{Introduction}

Problem 26 in Brauer's famous list of open problems \cite{bra} asks for the determination of the possible numbers of Sylow $p$-subgroups of finite groups, where $p$ is a prime. Probably the most significant paper in relation to this problem is M. Hall's 1967 paper \cite{hal}, along with the more recently achieved classification of finite simple groups. By Sylow's theorem, we know that if $\nu_p(G)$ is the number of Sylow $p$-subgroups of a group $G$, then $\nu_p(G)= pm+1$ for some positive integer $m$. When $p=2$, the dihedral group of order $2n$ for $n$ odd has $n$ Sylow $2$-subgroups. Thus the answer to Brauer's problem is clear when $p=2$. It is also easy to see that if $m=1$ and $p$ is any prime, there are groups with $1+p$ Sylow $p$-subgroups. Consider for instance $\SL(2,p)$.

\

Things get more complicated when $p>2$ and $m>1$.  M. Hall proved in Theorem 3.1 of \cite{hal} that if $m>1$, $p>2m+3$ and there is a finite group $G$ with $\nu_p(G)=pm+1$ then $pm+1$ is a prime power. Note that if $pm+1$ is a prime power, $\AGL(1,pm+1)$ has $pm+1$ Sylow $p$-subgroups, so the converse is true. Using this result, he proved in Theorem 3.2 that if $p>5$, then there are no groups with $3p+1$ Sylow $p$-subgroups. 

\

In this note, we aim at answering the following question. Are there other values of $m$ such that if $p$ is large enough, then there are no groups with $pm+1$ Sylow $p$-subgroups?  Using  the standard notation 
$\varphi(n)$ to be the Euler totient function  counting the number of positive integers smaller than $n$ and coprime to $n$, we prove the following 

\begin{thm}
\label{1}
Let $m>2$ be an integer such that $m+1$ is not a prime number, and let $p$ be a prime.  If $p>(m+1)^{\varphi(m)}/m$ and  $G$ is a finite group with $pm+1$ Sylow $p$-subgroups then $mp+1$ is prime and $G$ is $p$-solvable.
\end{thm}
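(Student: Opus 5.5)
Since $m>2$, the hypothesis $p>(m+1)^{\varphi(m)}/m$ gives $p>2m+3$, so Hall's Theorem 3.1 applies and $pm+1=q^a$ is a prime power. Hall's argument also gives $p$-solvability: the number of Sylow $p$-subgroups of $G$ is a product of numbers of the form $\nu_p(H)$ over nonabelian simple sections $H$ (together with prime powers $\equiv 1 \bmod p$ coming from chief factors). The classification then lets one rule out simple sections contributing a factor less than $p^2$ other than $1$, for $p$ large relative to $m$. I will take both conclusions as the starting point. The remaining task is purely number theoretic: show that $a=1$.

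Suppose $a\ge 2$, so $q^a=mp+1$ with $q$ prime.

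\textbf{Step 1 (reduce to $a$ prime).} If $a=bc$ with $b>1$ the smallest prime factor of $a$, then $q^a-1=(q^{c}-1)\cdot\Phi$. The prime $p$ divides exactly one of the factors, and the other factor is at most $m$. Either way we land in a situation with exponent $b$ prime and a cofactor dividing $m$. I will set it up so that $p \mid (r^\ell-1)/(r-1)$ with $r=q^{c}$ and $\ell$ prime. Here $r-1$ divides $m$, so $r\le m+1$, and the complementary factor is at most $m$.

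\textbf{Step 2 ($p\nmid r-1$).} Since $p>m\ge r-1$, we get $p\mid (r^\ell-1)/(r-1)$. Thus $r$ has multiplicative order $\ell$ modulo $p$, which forces $\ell\mid p-1$. Write
\[
mp+1=r^\ell, \qquad (r-1)\cdot\frac{r^\ell-1}{r-1}=mp.
\]
Then $(r^\ell-1)/(r-1)=p\cdot d$ with $d(r-1)=m$.

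\textbf{Step 3 (bound $\ell$ via $\varphi(m)$).} Reducing $r^\ell\equiv1 \pmod m$ shows that $\ell$ divides the order of $r$ modulo $m$ (when $\gcd(r,m)=1$, which holds since $r\equiv1\bmod(r-1)$ and $r^\ell\equiv 1 \bmod m$). Hence either $\ell\mid\varphi(m)$, so $\ell\le\varphi(m)$, or $r\equiv 1\pmod m$. The latter forces $r=m+1$ because $r\le m+1$. But $r=m+1$ is a prime power $q^c$, and since $m+1$ is not prime we have $c\ge2$. In that case I go back to Step 1 and choose the factorization so that $r$ is the prime $q$ itself. The case $r=m+1$ then cannot occur, or it reduces to a smaller exponent.

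\textbf{Step 4 (contradiction).} With $\ell\le\varphi(m)$ we get
\[
mp+1=r^\ell\le (m+1)^{\varphi(m)},
\]
so $p<(m+1)^{\varphi(m)}/m$, contradicting the hypothesis. Therefore $a=1$ and $mp+1$ is prime.

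The main obstacle is Step 3. There I must show that $\ell$ divides $\varphi(m)$ rather than merely dividing the order of $r$ modulo $p$, and I must handle carefully the edge case $r=m+1$. This is exactly where the hypothesis that $m+1$ is not prime is used. The bookkeeping in Step 1, choosing which factor of $q^a-1$ absorbs $p$ so that both $r\le m+1$ and $r$ prime hold, also needs care.
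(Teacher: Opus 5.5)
There is a genuine gap, and it is the one you flagged yourself: the edge case $r=m+1$ in Step 3. Your proposed remedy does not work.

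Re-choosing the factorization ``so that $r$ is the prime $q$ itself'' means working with the exponent $a=c\ell$, which is composite in this case. The dichotomy of Step 3 (the order of $r$ modulo $m$ divides the prime $\ell$, hence is $1$ or $\ell$) is then lost. You land in the situation the paper handles with an extra argument: it writes the exponent as $tn$ with $t=\mathrm{ord}_m(q)$, shows $m=q^t-1$ when $n\ge 2$, and uses $q^{tn}-1=(q^n-1)M_n$ together with the primality of $p$ to force $n\le t$.

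Nor is it true that the case cannot occur. The equation $mp+1=(m+1)^\ell$ with $m+1=q^c$, $c\ge2$ and $p$ prime does have solutions, e.g.\ $7\cdot 73+1=8^3=2^9$ and $26\cdot 757+1=27^3=3^9$ (Table 2). The case must be \emph{bounded}, not excluded.

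Inside your framework the repair is short. Here $m=q^c-1$, so $1\le q^j-1<m$ for $1\le j<c$. Hence $\mathrm{ord}_m(q)=c$ and $c\mid\varphi(m)$. Since $\ell$ is the smallest prime factor of $a=c\ell$ and $c\ge 2$ (this is where ``$m+1$ not prime'' enters), $\ell\le c\le\varphi(m)$, and Step 4 applies verbatim. With this fix your route is a valid, slightly slicker alternative to the paper's. Reducing to a prime exponent makes the order of $r$ modulo $m$ equal to $1$ or $\ell$, and choosing $\ell$ as the smallest prime factor gives $\ell\le c$ for free, replacing the paper's factorization argument for $n\le t$.

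There are also some smaller issues.
In Step 1 you never dispose of the case $p\mid q^c-1$. It is easy: then $\Phi\mid m$, while $\Phi\ge q^c+1>q^c-1\ge p>m$.

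In Step 3 the divisibility is stated backwards: the order of $r$ modulo $m$ divides $\ell$, not conversely. The dichotomy you draw is nevertheless correct.

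The claim that $p>(m+1)^{\varphi(m)}/m$ implies $p>2m+3$ fails for $m=3$, $p=7$. There you need Hall's Theorem 3.2 rather than 3.1; the paper glosses over this as well.

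Finally, $p$-solvability is asserted, not proved, and your justification is inaccurate as stated. Simple groups can contribute factors below $p^2$, e.g.\ $\nu_p(\mathrm{PSL}(2,p))=p+1$. What must be excluded is a simple section with $\nu_p(S)\mid pm+1$. The paper does this without the classification, once primality of $mp+1$ is known:
a nonabelian chief factor of order divisible by $p$ is then a simple group $S$ with $\nu_p(S)=mp+1<p^2$;
so $S$ embeds in a symmetric group of degree less than $p^2$, and its Sylow $p$-subgroup is elementary abelian;
Brodkey's theorem then forces $|P|=p$;
and Brauer's formula for $m$ contradicts the bound on $p$.
Alternatively, cite the paper's final, classification-based theorem, which gives $p$-solvability whenever $p>2m+3$.
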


Note that for odd $m>1$,  and $p>2$, $m+1$ and $mp+1$ are never prime, so this extends Hall's Theorem 3.2 to any odd $m$.

The key new tool in the proof of Theorem \ref{1} is the following result from number theory that has independent interest.

\begin{thm}
\label{2}
Given a positive  integer $m$, there exist finitely many positive integers $k,p,q$ with $k\ge 2$, $p$ prime and  $q\ne m+1$, such that 
$$
pm+1=q^k.
$$
Concretely we get $p\le C_m$ for $C_m=(m+1)^{\varphi(m)}/m$.

\end{thm}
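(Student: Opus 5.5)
The plan is to exploit the factorization $q^k-1=pm$ together with multiplicative orders modulo $m$ and modulo $p$. First dispose of degenerate cases. Since $q^k\equiv 1 \pmod m$ we have $\gcd(q,m)=1$, so the multiplicative order $e$ of $q$ modulo $m$ exists and $e\mid\varphi(m)$, in particular $e\le\varphi(m)$. For $m\le 2$ one checks directly that the only solutions have $q=m+1$ (e.g. for $m=1$, $p=q^k-1=(q-1)(q^{k-1}+\dots+1)$ forces $q=2$), so we may assume $m\ge 3$; then $\varphi(m)\ge 2$ and $C_m\ge (m+1)^2/m>m$, so any argument giving $p\le m$ already suffices.

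Next I use the divisor structure. For every proper divisor $j$ of $k$, $q^j-1$ is a proper divisor of $q^k-1=pm$. Since $p$ is prime, either $p\mid q^j-1$ or $q^j-1\mid m$. Suppose $p\mid q^j-1$ for some proper divisor $j$ of $k$. Then $(q^k-1)/(q^j-1)$ divides $m$, and it is at least $q^{k-j}$. Because $j\le k/2$, we get $p\le q^j-1<q^{k-j}\le m$, which is fine. Hence we may assume $p\nmid q^j-1$ for every proper divisor $j$, so $q^j-1\mid m$ for all such $j$. Taking $j=1$ gives $q-1\mid m$, so $q\le m+1$, and as $q\neq m+1$ we get $q\le m$.

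Now split according to $e$. If $e=k$, then $k\le\varphi(m)$ and
$$p=\frac{q^k-1}{m}<\frac{m^{\varphi(m)}}{m}<C_m.$$
If $e<k$, then $e$ is a proper divisor of $k$. Hence $q^e-1\mid m$ by the previous paragraph, while also $m\mid q^e-1$. So $q^e=m+1$, and $e\ge 2$ because $q\ne m+1$. Writing $k=et$, primality of $p=(q^{et}-1)/(q^e-1)$ forces $t$ to be prime. Apply the previous paragraph to the proper divisor $t$ of $k$ (note $t<k$ since $e\ge2$): then $q^t-1\mid m=q^e-1$, which gives $t\mid e$ and so $t\le e$. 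Therefore
$$p<\frac{q^{et}}{m}=\frac{(m+1)^t}{m}\le \frac{(m+1)^{e}}{m}.$$
It remains to show $e\le\varphi(m)$ in this case, which holds because $e$ is the order of $q$ modulo $m$. This yields $p\le C_m$.

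The main obstacle is bookkeeping: making sure every branch (the $p\mid q^j-1$ branch, the case $q^e=m+1$, and small $m$) really lands under the specific constant $C_m=(m+1)^{\varphi(m)}/m$, and not merely under some bound depending on $m$. In particular I would double-check the sub-case $q^e=m+1$ with $t\mid e$, and the boundary $m\le 2$, by hand. Finiteness of the triples $(k,p,q)$ then follows, since $p\le C_m$ bounds $q^k=pm+1$.
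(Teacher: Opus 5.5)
Your proof is correct and follows essentially the same route as the paper: the order $e$ of $q$ modulo $m$, the split $k=et$ into $t=1$ and $t\ge 2$, the identity $m=q^e-1$ in the latter case, and $t\le e$, which gives $p<(m+1)^e/m\le C_m$. The only difference is bookkeeping: the paper first splits on $q>p$ versus $q<p$ and then obtains $m=q^e-1$ and $t\le e$ by separate primality arguments on the cofactors $(q^k-1)/(q^l-1)$, whereas your single dichotomy ``$p\mid q^j-1$ (forcing $p<m$) or $q^j-1\mid m$'' over all proper divisors $j$ of $k$ gives both facts at once, the second in the sharper form $t\mid e$.
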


It seems likely that the hypothesis $q\ne m+1$ is necessary in Theorem \ref{2}, even if we assume that $q$ is prime. This would mean that the hypothesis $m+1$ is not prime is necessary in Theorem 1. Proving this, however, appears to be a very complicated problem, related to the existence of infinitely many (generalized) Mersenne primes. We refer the reader to  Section \ref{mer} for more details on this. These are interesting questions in number theory, that will be studied more deeply elsewhere. 
  
The layout of this paper is as follows. We prove Theorem 2 and study related questions in number theory in Section 2. We use this to complete the proof of Theorem 1 in Section 4. In Section 3 we discuss the open questions on what we call generalized Mersenne numbers.   

\section{A diophantine equation}

We are interested in the solutions in integers of 
\begin{equation}\label{eq:eq}
pm+1=q^k,
\end{equation}
for $p$ a prime number and $m,q,k$ positive integers. We are particularly interested in the existence of solutions with $q$ also prime.

\subsection{Fixing $p$}

\begin{thm} For any prime $p$  and integer $k\ge 1$ there are infinitely many integers $m$ and primes $q$\ such that 
$$
pm+1=q^k.
$$
\end{thm}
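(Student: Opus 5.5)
The plan is to reduce the statement to Dirichlet's theorem on primes in arithmetic progressions. Fix a prime $p$ and an integer $k\ge 1$. We need infinitely many primes $q$ with $q^k\equiv 1 \pmod p$. For each such $q$, put $m=(q^k-1)/p$. This is a positive integer as soon as $q^k>1$, and it satisfies $pm+1=q^k$. So it suffices to exhibit infinitely many primes $q$ in a residue class that forces $q^k\equiv 1\pmod p$.

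The simplest choice is the class $q\equiv 1\pmod p$. Since $\gcd(1,p)=1$, Dirichlet's theorem gives infinitely many primes $q\equiv 1 \pmod p$. For each such $q$ we have $q^k\equiv 1\pmod p$ for every $k\ge1$. Distinct primes $q$ give distinct values of $q^k$, hence distinct integers $m=(q^k-1)/p\ge 1$. This produces infinitely many pairs $(m,q)$ as required.

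If one wants to avoid the full strength of Dirichlet's theorem, the elementary special case for the progression $1\bmod p$ is enough. It follows from the cyclotomic polynomial argument: every prime divisor $r\nmid n$ of $\Phi_p(n)$ satisfies $r\equiv 1\pmod p$. Evaluating $\Phi_p$ at $N!$ for growing $N$ then yields infinitely many such primes.

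There is no genuine obstacle in this argument. The only point needing care is positivity and integrality of $m$, and both are immediate since $q\ge 2$ gives $q^k\ge 2$ and $p\mid q^k-1$. One could remark that other residue classes also work (any $a$ with $a^k\equiv1\pmod p$, of which there are $\gcd(k,p-1)$), but the class of $1$ already suffices for the statement.
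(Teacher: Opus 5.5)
Your main argument is correct and is exactly the paper's proof: Dirichlet's theorem gives infinitely many primes $q\equiv 1\pmod p$, and $m=(q^k-1)/p$ then works. One small slip in your optional elementary aside: the correct lemma is that every prime divisor $r\neq p$ of $\Phi_p(n)$ satisfies $r\equiv 1\pmod p$. Your condition $r\nmid n$ is vacuous and does not exclude $r=p$ (e.g.\ $\Phi_3(4)=21$), but evaluating at $N!$ with $N\ge p$ still works, since then $p\nmid\Phi_p(N!)$.
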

\begin{proof} By Dirichlet Theorem there are infinitely many primes $q\equiv 1\pmod p$, so $p|q^k-1$ and we can consider the solution given by $q$ and  $m=(q^k-1)/p$.
\end{proof}

\subsection{Fixing $m$}

For the case $k=1$ given any $m$ and  prime $p$,  we have the trivial solution $q=mp+1\in \N$. Also, if we fix an even integer $m$, it is conjectured that $pm+1=q$ has infinitely many solutions $p,q$ prime numbers as a consequence of Dickson's conjeture applied to the linear equations  $n$ and $nm+1$, since precisely the conjecture claims that both will be prime for infinitely many $n$. Finally note that for $m$ odd $pm+1$ is even, unless $p=2$, so there is at most one prime solution $q=2m+1$.

\

Now consider $k\ge 2$ and $m$ fixed. The following is Theorem \ref{2}.
\begin{thm} Given a positive  integer $m$, there exist finitely many positive integers $k,p,q$ with $k\ge 2$, $p$ prime and  $q\ne m+1$, such that 
$$
pm+1=q^k.
$$
Concretely we get $p\le C_m$ for $C_m=(m+1)^{\varphi(m)}/m$.
\end{thm}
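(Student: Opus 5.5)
The plan is to use the multiplicative order of $q$ modulo $m$ to force $k\le\varphi(m)$, and then to show directly that $q\le m+1$; together these give $p=(q^k-1)/m\le C_m$. The one delicate point is an intermediate case in which $q$ is a proper power root of $m+1$. There we will need cyclotomic factors to show that a certain quotient cannot be prime.

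\emph{Setup and the order of $q$.} Suppose $pm+1=q^k$ with $k\ge 2$, $p$ prime and $q\ne m+1$. Since $q^k\equiv 1\pmod m$, we have $\gcd(q,m)=1$. Let $d$ be the multiplicative order of $q$ modulo $m$, so $d\mid k$ and $d\le\varphi(m)$. Because $d\mid k$, the number $q^d-1$ divides $q^k-1=pm$, and $m\mid q^d-1$. Writing $q^d-1=mt$, we get $t\mid p$, so $t=1$ or $t=p$. If $t=p$, then $q^d-1=pm=q^k-1$, hence $k=d\le\varphi(m)$.

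\emph{Ruling out $t=1$ (main obstacle).} If $t=1$, then $q^d=m+1$. If $d=1$, then $q=m+1$, which is excluded. So assume $d\ge 2$, and write $k=de$. Here $e\ge 2$, since $e=1$ would give $p=1$. Then
$$p=\frac{q^{de}-1}{q^d-1}=\prod_{n\mid de,\ n\nmid d}\Phi_n(q),$$
where $\Phi_n$ denotes the $n$-th cyclotomic polynomial. Both $n=e$ and $n=de$ satisfy $n\mid de$ and $n\nmid d$: for $n=de$ this is because $e\ge 2$, and for $n=e$ we would need the check $e\nmid d$. These are distinct indices since $d\ge 2$, and each $\Phi_n(q)>1$ for $n\ge 2$, $q\ge 2$. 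Hence $p$ would be composite, a contradiction.

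The point I expect to need care is the case $e\mid d$, where $n=e$ is not among the factors. In that case I would instead use the two distinct indices $n=de$ and $n=d'e$, where $d'$ is any divisor of $d$ with $d'e\nmid d$. Such a divisor exists, e.g.\ taking $n=de/\ell$ for a prime $\ell\mid d$ and checking that it does not divide $d$; failing that, I would argue directly via $\gcd(q^e-1,\ (q^{de}-1)/(q^e-1))$. So only $t=p$ survives, and $k=d\le\varphi(m)$.

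\emph{Bounding $q$.} We have $q-1\mid q^k-1=pm$, so either $q-1\mid m$ or $p\mid q-1$. If $q-1\mid m$, then $q\le m+1$. If $p\mid q-1$, then $q-1\ge p$, and since $k\ge 2$,
$$pm=q^k-1\ge q^2-1=(q-1)(q+1)\ge p(q+1),$$
which gives $q\le m-1$. In either case $q\le m+1$, and therefore
$$p=\frac{q^k-1}{m}\le\frac{(m+1)^{\varphi(m)}-1}{m}<C_m.$$
This bounds $p$ and $k$, hence also $q=(pm+1)^{1/k}$, so there are only finitely many triples $(k,p,q)$.
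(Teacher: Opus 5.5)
Your proof has a genuine gap at the step ``Ruling out $t=1$'': that case cannot be ruled out, because it actually occurs. Your cyclotomic argument is fine when $e\nmid d$. When $e\mid d$, however, the set of indices $n\mid de$ with $n\nmid d$ can reduce to the single element $de$. Given $e\mid d$, this happens exactly when $d=\ell^j$ and $e=\ell$ for a prime $\ell$, and then $p=\Phi_{\ell^{j+1}}(q)$ is a single cyclotomic value that may well be prime. In that configuration your fallback indices $d'e$ and $de/\ell$ all collapse either to $de$ or to a divisor of $d$. No gcd argument can rescue this, because $p$ really is prime. Concretely:
\begin{itemize}
\item $3\cdot 5+1=2^4$, with $m=3$, $q=2$, $d=e=2$ and $p=\Phi_4(2)$;
\item the paper's Table~2 lists $7\cdot 73+1=2^9$ and $26\cdot 757+1=3^9$, where $m+1=q^3$, $d=e=3$ and $p=\Phi_9(q)$.
\end{itemize}
In all three examples $k=de\neq d$, and for $m=3$ and $m=7$ even $k>\varphi(m)$. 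So your intermediate claim ``$k=d\le\varphi(m)$'' is false, and your final estimate, which rests on it, does not cover these solutions.

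The repair is to bound this case instead of excluding it, which is what the paper does. In the case $q^d=m+1$, your argument does show that $p$ prime forces $e\mid d$, hence $e\le d\le\varphi(m)$. The paper, writing $k=an$, obtains the weaker but sufficient bound $n\le a$ from $q^k-1=(q^n-1)M_n$ and $\gcd(m,q^n-1)$. From $e\le\varphi(m)$ you get directly $pm=(m+1)^e-1<(m+1)^{\varphi(m)}$, i.e.\ $p<C_m$, even though $k=de$ itself need not be at most $\varphi(m)$. With this change, the rest of your proof is correct and gives the theorem: the case $t=p$ yields $k=d\le\varphi(m)$, and $q\le m+1$ follows from $q-1\mid pm$.
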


\begin{proof} Suppose $p,q,k$  is a solution to Equation \ref{eq:eq} with $k\ge 2$.  Note that if $q>p$, then $pm+1\ge(p+1)^2$, which is impossible if $m\le 2$ and in general it gives $p\le m-2\le C_m$, since $m^2-2m\le (m+1)^2\le (m+1)^{\varphi(m)}$. Note that $\varphi(m)\ge 2$ for any $m\ge 3$. Hence, from now on  we  assume $q<p$, and  in particular  $p\nmid q-1$ . If $l|k$ and $l\ne k$ then we know that 
\begin{equation}\label{eq:divir}
q^k-1=(q^l-1)M_l,
\end{equation} 
for some integer $M_l\ge q^l+1$, since $q^k-1\ge q^{2l}-1=(q^l-1)(q^l+1)$. Using identity  (\ref{eq:divir}) for $l=1$ in equation (\ref{eq:eq}),  we get that $p|M_1$ since $p\nmid q-1$, and hence, $(q-1)|m$, 
which, from our hypothesis $q\ne m+1$, gives $q\le \frac m2+1$. 

\

It is clear that $(q,m)=1$ so let $a=$ord$_m(q)$, the multiplicative order of $q$ in $(\Z/m\Z)^*$. If $a=1$, then  $q=ms+1$, which can not happen since $q\le \frac m2+1$, so $a\ge 2$ and, since $a|k$,  we have a solution to

$$
pm+1=q^{an},
$$
for some integer $n\ge 1$.  Suppose $n=1$.  Then
$$
pm=q^a-1< \left(\frac m2+1\right)^{\varphi(m)},
$$
which gives again $p\le C_m$.

\

Assume then that $n\ge 2$. This time we use  (\ref{eq:divir}) with $l=a$. Then, by definition of $a$ we get that $m|q^a-1$, and hence,  $m=q^a-1$  since otherwise $p$ cannot be prime. 

\medskip

We now  use (\ref{eq:divir}) with $l=n$. 
 If $n> a\ge 2$ then,  $M_n\ge q^n+1>q^n-1> q^a-1=m$. Then if $d=(m,q^n-1)$ we have 
$$
p=\left(\frac{q^n-1}{d}\right)\left(\frac{M_n}{m/d}\right),
$$
and $p$ can not be prime since both factors are bigger than $1$,  so it follows that  $n\le a$. Hence
$$
pm< \left(q^{a}\right)^a= (m+1)^{a}\le (m+1)^{\varphi(m)}.
$$
which gives again $p<C_m$ and the result follows.

\end{proof}

From the proof we get the following interesting corollary. For any integer $n$, we will use $\tau(n)$ to denote the number of divisors of $n$, as usual.
\begin{cor} Given a positive  integer $m$, such that $m+1$ is not a perfect power,  there exist at most $m-2+\tau(m)$ triples  of integers $(k,p,q)$ with $k\ge 2$, $p$ prime  and integer $q\ne m+1$, such that 
$$
pm+1=q^k.
$$
\end{cor}
\begin{proof}  Following the proof of the previous theorem we see that if $p<q$, then $p\le m-2$. But then,  $q^2\le q^k=pm+1\le (m-1)^2$, so $q\le m-1$. On the other hand  if $q<p$ then we get $q\le \frac m2+1\le m$. So, in any case ord$_m(q)=a>1$ and since $m+1$ is not a perfect power then $pm+1=q^a$ is deduced in the theorem. Then if we let $(m,q-1)=d$, then 
$$
p=\frac{q-1}{d}\frac{\sum_{j=0}^{a-1}q^j}{m/d}
$$ 
so either $q-1=d$, or $\sum_{j=0}^{a-1}q^j=\frac md$. In the first case $q-1$ is a divisor of $m$, $a$ the order of $d+1$ modulo $m$ and $p=\frac{q^a-1}m$. So at most there are $\tau(m)$ triples of this type. In the second case $p=\frac{q-1}{d}<q$ so $p\le m-2$ and hence $q\le m-1$ as seen before. So,  noting that $q>1$, we take each $2\le q\le m-1$ then $a$ is fixed to be ord$_m(q)$ and once $a,q$ are fixed again $p=\frac{q^a-1}m$, so there are at most $m-2$ triples of this type. Adding both cases we obtain the result.

\end{proof}
It is interesting to note also  that, in fact, for $p>m$, $k=ord_p(q)=b$ since, otherwise, setting  $b=$ord$_p(q)$ we get 
$$
pm=q^{bl}-1\ge q^{2b}-1=(q^b-1)(q^b+1)>p^2,
$$
which gives $p<m$. In particular it follows that any prime $p$ such that  $pm+1=q^k$ with $q\ne m+1$ verifies that $ord_p(q)<m^2$.

\section{Generalized Mersenne numbers}\label{mer}

As we have mentioned, the hypothesis $m+1$ not prime seems necessary. Indeed, Equation \ref{eq:eq} could have infinitely many solutions of the form $pm+1=(m+1)^k$ for $p=((m+1)^k-1)/m$ a prime number. For example if $m=1$ these would be Mersenne primes, and we expect that there are infinitely many of them. In general, primes of the form 
$p=(m^k-1)/(m-1)$ for some integer  $m$ are called repunits in base $m$, and according to the generalized repunit conjecture, if for a given $m$ there is a repunit prime in base $m$, then there are infinitely many of them. We call those $m$ \lq\lq suitable\rq\rq and it is clear that not every $m$ is suitable, for example if  $m$ is a perfect power, the sequence $(m^k-1)/(m-1)$ does not  have any single prime. For more examples  of non suitable integers,  we address the interested reader to check the literature related with aurelian factorizations, see for example \cite{and}. The name repunit comes from the fact that, once represented in base $m$ it contains only the digit $1$ in the representation.  A simple program using python already shows the existence of repunit primes.  We include on Table 1  a few examples  for $m=3, 5, 6,7,10,11,12, 13,14,15$ and $17$.
\begin{table}
\caption{Repunit primes}
$$
\begin{array}{|l|c|c|}\hline
m&k&p\\
\hline
3&3&13\\
&7&1093\\
&13&797161\\
\hline
5&3&31\\
&7&19531\\
&11&12207031\\
&13&305175781\\
&47&177635683940025046467781066894531\\
\hline
6&2&7\\
&3&43\\
&7&55987\\
&29&7369130657357778596659\\
\hline
7&5&2801\\
&13&16148168401\\
\hline
10&2&11\\
&19&1111111111111111111\\
&23&11111111111111111111111\\
\hline
11&17&50544702849929377\\
&19&6115909044841454629\\
\hline
12&2&13\\
&3&157\\
&5&22621\\
&19&29043636306420266077\\
\hline
13&5& 30941\\
&7&5229043\\
\hline
14&7& 8108731\\
&19&459715689149916492091\\
\hline
15&3&241\\
\hline
17&3&307\\
&5&88741\\
&7&25646167\\
&11&2141993519227\\
\hline
\end{array}
$$
\end{table}

\

Note that in  all those solutions $k$ must have prime since, otherwise, if $k=ab$ then $(m^k-1)/(m-1)$ has the trivial factor $(m^a-1)/(m-1)$.

\

The existence of repunit primes is one of the most important problems in analytic number theory, and has a vast literature around the subject. Even to prove the problem letting the base vary is really hard. Of course one can consider repunit primes of the form $11$ by just considering the base $m=p-1$. However, just trying to prove the existance of  infinitely many repunit primes with three digits would be as proving the existence of infinitely many primes of the form $m^2+m+1$, another of the most classical problems about  distribution of primes.

\

In this line  we include  a result proving the existence of infinitely many repunit primes, with a fixed number of digits, which is a direct consequence of Bunyakovsky conjecture. We first  include it  for convenience to the reader

\begin{con}[Bunyakovsky]\label{con:bu} Let $f(x)=\sum_{j=0}^{d}a_jx^j\in\Z[x]$ be an irreducible  polynomial with positive  leading coefficient , and 
such that gcd$(a_1,\dots, a_d)=1$. Then
$f(n)$ is prime for infinitely many positive integers  $n$.
\end{con}

\

\begin{thm} For any given  prime  number $l$, if Bunyankovsky conjectyure is true, there exist infinitely many repunit primes with $l$ digits.  
\end{thm}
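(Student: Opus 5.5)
We apply Conjecture \ref{con:bu} to the cyclotomic polynomial
$$
f(x)=\Phi_l(x)=\frac{x^l-1}{x-1}=x^{l-1}+x^{l-2}+\dots+x+1 .
$$
For $n\ge 2$, the value $f(n)$ is exactly the repunit with $l$ digits in base $n$. So it suffices to check that $f$ satisfies the hypotheses of Conjecture \ref{con:bu}. The conjecture then produces infinitely many positive integers $n$ with $f(n)$ prime. We must also make sure these give infinitely many distinct repunit primes with exactly $l$ digits.

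The hypotheses are verified as follows.

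\emph{Positive leading coefficient.} The leading coefficient of $f$ is $1$.

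\emph{Coefficient condition.} Every coefficient equals $1$, so any gcd condition on the coefficients holds trivially.

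\emph{Irreducibility.} Since $l$ is prime, $\Phi_l$ is irreducible over $\mathbb{Q}$. I would prove this with the standard Eisenstein argument on $f(x+1)$:
$$
f(x+1)=\frac{(x+1)^l-1}{x}=\sum_{j=1}^{l}\binom{l}{j}x^{j-1}.
$$
This polynomial is monic of degree $l-1$. Its constant term is $\binom{l}{1}=l$, which is not divisible by $l^2$. All the other non-leading coefficients $\binom{l}{j}$ with $1\le j\le l-1$ are divisible by $l$. Eisenstein's criterion at the prime $l$ gives irreducibility of $f(x+1)$, hence of $f$. Since $f$ is monic it is primitive, so it is irreducible in $\mathbb{Z}[x]$ as well.

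Conjecture \ref{con:bu} then gives infinitely many $n\ge 1$ with $f(n)$ prime. We discard $n=1$, where $f(1)=l$ is not a repunit in any base $\ge 2$; infinitely many $n\ge 2$ remain. For each such $n$, the base-$n$ expansion of $f(n)$ is the string of $l$ ones, so $f(n)$ is a repunit prime with $l$ digits. Since $f$ is strictly increasing on $[1,\infty)$, distinct $n$ give distinct primes, and we get infinitely many repunit primes with $l$ digits.

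The main point needing care is irreducibility, and this is exactly where the primality of $l$ is used. For composite $l=ab$ the polynomial $\frac{x^l-1}{x-1}$ has the factor $\frac{x^a-1}{x-1}$, as noted in the text, so the argument fails. The Eisenstein shift above handles the prime case cleanly. A minor point is that Conjecture \ref{con:bu} as stated uses $\gcd(a_1,\dots,a_d)$ rather than the usual fixed-divisor condition. Here it is harmless, since $f(0)=1$ and $f(1)=l$ are coprime, so no prime divides all values of $f$.
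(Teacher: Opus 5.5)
Your proof is correct and is the paper's argument: apply the Bunyakovsky conjecture to the irreducible polynomial $(x^l-1)/(x-1)$. Your Eisenstein verification of irreducibility, the discarding of $n=1$, the injectivity check, and the remark on the nonstandard gcd hypothesis fill in details the paper leaves implicit; the paper differs only in handling $l=2$ unconditionally via base $p-1$ instead of through the conjecture. One tiny wording slip: $f(1)=l$ can be a repunit in some base (e.g.\ $3=(11)_2$), just not an $l$-digit one in a base $\ge 2$, which is all you need.
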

\begin{proof} For $l=2$, the result is trivially true unconditionally by just selecting the base $m=p-1$. For $l\ge 3$, prime,  we just have to notice that the polynomial $f(x)=\frac{x^l-1}{x-1}$ is irreducible, so we can apply Conjecture \ref{con:bu}, and clearly if $f(m)=p$, prime, then $p$ is a  repunit with $l$ digits  on base $m$.
\end{proof}

\begin{remark} We would like to finish this section with a very simple remark emphasizing this surprising conection between finite groups and the existence of repunit primes. In particular finding groups with $mp+1$ $p$-Sylow groups will be useful to find repunit primes, namely  if $pm+1=(m+1)^k$, then $p=\sum_{j=0}^{k-1}(m+1)^j$ would be a repunit in base $m+1$.

\end{remark}

\section{Sylow numbers}

Let $m\geq2$ be an integer and let $p>2$ be  prime. If $mp+1$ is prime, we say that $mp+1$ is an $m$-Sophie Germain prime. In particular, $2$-Sophie Germain primes are the usual Sophie Germain primes.  Note that if $mp+1$ is an $m$-Sophie Germain prime then $m$ is even. By Dickson's conjecture, it is expected that for every even $m$, there are infinitely many $m$- 
Sophie German primes. 

\

We will use  the following result, which is a consequence of Theorem 2.1 of \cite{hal}

\begin{lem}
\label{lemhal}
Let $p$ be a prime number and let $G$ be a finite group. If $S$ is a composition factor (or a chief factor) of $G$, then $\nu_p(S)$ divides $\nu_p(G)$.
\end{lem}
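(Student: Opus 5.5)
This lemma is the special case of Theorem 2.1 of \cite{hal} that we need, so the plan is to reduce it to two elementary divisibility facts about Sylow numbers and then climb a composition or chief series.

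\emph{Fact (a), normal subgroups.} If $N\trianglelefteq G$, then $\nu_p(N)$ divides $\nu_p(G)$. Let $P\in\Syl_p(G)$. By Sylow's theorem $P\cap N\in\Syl_p(N)$, and $G$ permutes the Sylow $p$-subgroups of $G$ transitively. The map $Q\mapsto Q\cap N$ sends Sylow $p$-subgroups of $G$ to Sylow $p$-subgroups of $N$. This map is $G$-equivariant under conjugation and, by the Frattini argument, surjective. All its fibres have the same size, because $G$ acts transitively on $\Syl_p(N)$ (already $N$ does). Hence $\nu_p(G)=\nu_p(N)\cdot|\text{fibre}|$.

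\emph{Fact (b), quotients.} If $N\trianglelefteq G$, then $\nu_p(G/N)$ divides $\nu_p(G)$. The Sylow $p$-subgroups of $G/N$ are exactly the images $PN/N$. The map $P\mapsto PN/N$ is surjective and $G$-equivariant, and $G$ acts transitively on $\Syl_p(G/N)$. So again all fibres have equal size and the divisibility follows.

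\emph{Climbing the series.} Take a composition series (or chief series) $1=G_0\trianglelefteq G_1\trianglelefteq\cdots\trianglelefteq G_r=G$ with $S\cong G_i/G_{i-1}$. For a chief series each $G_j$ is normal in $G$, but for a composition series it is only subnormal. Fact (a) nevertheless applies at each step $G_j\trianglelefteq G_{j+1}$, so iterating gives $\nu_p(G_i)\mid\nu_p(G_{i+1})\mid\cdots\mid\nu_p(G)$. Applying Fact (b) to $G_{i-1}\trianglelefteq G_i$ then gives $\nu_p(S)\mid\nu_p(G_i)$, and chaining the divisibilities finishes the proof.

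The only real care point is the equal-fibre-size claim. In each case it follows from the orbit-counting principle: a $G$-equivariant surjection between transitive $G$-sets has fibres of equal size, since $G$ permutes the fibres transitively. Transitivity on the target must be checked separately in each case, via $N\subseteq G$ in (a) and via Sylow's theorem in the quotient in (b). I expect no other obstacle; in particular, the classification is not needed here.
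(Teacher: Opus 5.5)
Your proof is correct: Facts (a) and (b) both follow soundly from equivariant surjections onto transitive $G$-sets, and chaining them along a composition or chief series gives $\nu_p(S)\mid\nu_p(G)$. The paper gives no argument beyond citing Theorem 2.1 of \cite{hal}, which yields exactly these normal-subgroup and quotient divisibilities, so your proposal is a self-contained version of the same reasoning.
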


The following is Theorem \ref{1}.  

\begin{thm}
Let $m>2$ be an integer such that $m+1$ is not a prime number, and let $p$ be a prime.  If $p>(m+1)^{\varphi(m)}/m$ and  $G$ is a finite group with $mp+1$ Sylow $p$-subgroups then $mp+1$ is prime and $G$ is $p$-solvable.
\end{thm}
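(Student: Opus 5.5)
Since $p>(m+1)^{\varphi(m)}/m\ge m$ (as $\varphi(m)\ge 2$ for $m\ge 3$), we have $p>2m+3$ as soon as $(m+1)^{\varphi(m)}/m> 2m+3$. This holds for every $m>2$ with $m+1$ composite, the smallest such $m$ being $m=3$, where $16/3>9$ fails. So the case $m=3$ needs care, or else a direct appeal to Hall's Theorem 3.2, which covers $3p+1$ for $p>5$; with $p>16/3$ we have $p\ge 7$, so that case is fine. For larger $m$, $(m+1)^{\varphi(m)}/m$ grows fast enough to exceed $2m+3$, and I will verify this by a small check ($m=5$: $6^4/5>13$, etc.). Hence Hall's Theorem 3.1 applies, and $mp+1=q^k$ is a prime power. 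If $k\ge 2$, then Theorem~\ref{2} applies, because $q\ne m+1$: since $m+1$ is not prime, $m+1\neq q$ for the prime $q$. Theorem~\ref{2} gives $p\le C_m$, contradicting the hypothesis. Therefore $k=1$ and $mp+1$ is prime.

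For $p$-solvability, I will use Lemma~\ref{lemhal}. Let $S$ be a nonabelian composition factor of $G$ of order divisible by $p$. Then $\nu_p(S)$ divides the prime $mp+1$, and $\nu_p(S)>1$ because $S$ is simple nonabelian with $p\mid |S|$: a normal Sylow $p$-subgroup would have to be all of $S$, making $S$ a $p$-group. Hence $\nu_p(S)=mp+1=:r$, a prime. Then $S$ acts transitively by conjugation on its $r$ Sylow $p$-subgroups, and this action is faithful by simplicity. So $S$ embeds as a transitive subgroup of $S_r$ of prime degree $r$ in which the point stabilizer $N_S(P)$ has index $r$.

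The main obstacle is ruling out such $S$. I would first try a counting argument. $P$ fixes exactly one point (itself), since $P$ normalizes $Q$ only if $P=Q$. Hence $P$ acts on the remaining $mp$ points with orbits of $p$-power size greater than $1$. Also, $r\,\|\,|S|$ because $r^2\nmid r!$, so a Sylow $r$-subgroup is cyclic of order $r$. Next I will invoke the classical classification of simple groups with a subgroup of prime index $r$, due to Guralnick and relying on CFSG: $S=A_r$, $S=\mathrm{PSL}(d,s)$ with $r=(s^d-1)/(s-1)$, $S=\mathrm{PSL}(2,11)$ with $r=11$, or $S=M_{11},M_{23}$. I will check each case:
\begin{itemize}
\item $A_r$: $\nu_p(A_r)$ is far larger than $r$ for $r>p$.
\item Sporadic cases: these are small, so $p$ is bounded there, which contradicts the size of $p$.
\item $\mathrm{PSL}(d,s)$: $N_S(P)$ would have to be a parabolic (hyperplane-stabilizer) subgroup. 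But for large $p$ dividing $|S|$, with $p\mid (s^j-1)$, the normalizer of a Sylow $p$-subgroup is not a maximal parabolic of index $r$, except possibly in small-rank cases, which I will compare directly.
\end{itemize}
Once every case is excluded, every composition factor of order divisible by $p$ is abelian, so $G$ is $p$-solvable.
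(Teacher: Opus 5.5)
Your first half is the paper's argument: Hall's Theorem 3.1, then Theorem~\ref{2}, with $q\ne m+1$ because $q$ is prime and $m+1$ is not. You are actually more careful than the paper, which invokes Hall's Theorem 3.1 without checking $p>2m+3$. The only pair where that check fails is $(m,p)=(3,7)$, and your appeal to Hall's Theorem 3.2 disposes of it vacuously. (Your sentence should read ``holds for every such $m$ except $m=3$''.)

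For $p$-solvability you take a genuinely different route. The paper avoids the classification:
\begin{itemize}
\item since $m<p$, $S$ embeds in a symmetric group of degree $mp+1<p^2$, so $P$ is elementary abelian;
\item every nontrivial $P$-orbit on the other Sylow subgroups has length $p$;
\item Brodkey's theorem gives some $Q$ with $P\cap Q=O_p(S)=1$, so $|P|=p$;
\item Brauer's formula then gives $m\ge p/2$, contradicting $p>C_m$.
\end{itemize}
You instead place $S$ in Guralnick's list of simple groups with a subgroup of prime index. That costs the classification, but once completed it yields more. A nonabelian simple group with a prime Sylow $p$-number must have $p=2$ and $S\cong\mathrm{PSL}(2,2^f)$. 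So your route needs the size hypothesis on $p$ only to force $p$ odd, whereas the paper uses both $p>m$ and $p>2m$ essentially.

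As written, though, the $\mathrm{PSL}(d,s)$ case is only asserted, and the assertion points at the wrong place. Write $s=\ell^f$ and $H=N_S(P)$. By Guralnick, $H$ is a point or hyperplane stabilizer, with unipotent radical $U=O_\ell(H)$ and $C_H(U)=U$. Since $P\trianglelefteq H$, we have $P\le O_p(H)$.

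\begin{itemize}
\item If $p\ne\ell$ (your ``$p\mid s^j-1$'' case), then $O_p(H)$ and $U$ are normal of coprime order, so they commute. Hence $O_p(H)\le C_H(U)=U$, so $O_p(H)=1$, a contradiction. No largeness of $p$ and no small-rank exceptions are involved.
\item The genuine exceptions live in defining characteristic, which your sketch never mentions. For example, $\mathrm{PSL}(2,16)$ has $17=8\cdot 2+1$ Sylow $2$-subgroups, with $m=8>2$ and $m+1=9$ not prime. It satisfies every hypothesis except $p>C_m$, so this is exactly where that hypothesis must be used.
\item To close this case: if $p=\ell$, then $N_S(P)$ is a Borel subgroup and $\nu_p(S)=\prod_{i=2}^{d}(s^i-1)/(s-1)$. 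This is prime only for $d=2$. Then $r=s+1=p^f+1$ is an odd prime only if $p=2$, which is impossible since $p>C_m\ge 16/3$.
\end{itemize}

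The $A_r$ case and the cases $\mathrm{PSL}(2,11)$, $M_{11}$, $M_{23}$ are also cleaner handled the same way. The index-$r$ subgroup ($A_{r-1}$ with $r\ge 22$, $A_5$, $M_{10}$, $M_{22}$) has no nontrivial normal $p$-subgroup, so it cannot be $N_S(P)$; this avoids the loose size estimates.
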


\begin{proof}
By Theorem 3.1 of \cite{hal}, we know that $mp+1=q^k$ for some prime $q$ and integer $k\geq1$. By Theorem 2, $k=1$. 

Now, suppose that $G$ is not $p$-solvable. By Lemma \ref{lemhal}, we know that if $K/L$ is a chief factor of $G$, then $\nu_p(K/L)$ divides $\nu_p(G)$. Since $\nu_p(G)=q$ is prime, if follows that if $K/L$ is a nonabelian chief factor of $G$ of order divisible by $p$, then $K/L=S$ is simple and $\nu_p(S)=q=mp+1$. 

Arguing as in the proof of Theorem 3.1 of \cite{hal}, we can conclude that this is not possible. For the convenience of the reader, we sketch the nice argument.

Let $P\in\Syl_p(S)$. 
Since $m<p$, $S$ is isomorphic to a subgroup of a symmetric group on less than $p^2$ letters. Hence $P$ is elementary abelian. Suppose that $|P|>p$ and let $Q$ be another Sylow $p$-subgroup of $S$. Then $|P:P\cap Q|<mp+1<p^2$, so $|P:P\cap Q|=p$. Using Brodkey's theorem (Theorem 1.37 of \cite{isa}), we conclude that $|P|=p$. 

Now, we can use Brauer's theory of  groups with a Sylow $p$-subgroup of order $p$. By Theorem 2 of \cite{br},  
\begin{equation}\label{eq:easy}
m=\frac{hup+u^2+u+h}{u+1}
\end{equation} 
for some positive  integers $u$ and $h$. Using the hypothesis that $p>(m+1)^{\varphi(m)}/m$ it is easy to see that this is not possible.  Indeed,  $m=1,2,4,6$ are the only positive integers such that $\varphi(m)\le 2$ but, by hypothesis $m>2$, and $4,6$ can be checked one by one. For the rest of integers $m$, we have $\varphi(m)\ge 3$ and  $(m+1)^{\varphi(m)}/m>m^2+3m$, which, from the identity (\ref{eq:easy}) gives, noting that $\frac u{u+1}\ge \frac12$ for any positive integer $u$,
$$
m>\frac{m^2+3m}{2},
$$
which is impossible.
\end{proof}

As we have mentioned, for  any fixed even $m$, we expect $mp+1$ to be prime infinitely many times.  The bound $p>(m+1)^{\varphi(m)}/m$ surely is not best possible. 
However, we cannot assume in general that $p>2m+3$ as in Hall's theorem. As an example, we include   Table 2. These are all the primes $1000>p>2m+3$ such that $pm+1$ is a prime power that is not prime and it is not a repunit, already included in Table 1. Observe that for $m=7$ and $m=26$, $m+1=q^3$, and in the rest we see that ord$_{23}(2)=11$, ord$_{54}(19)=3$, ord$_{90}(31)=3$, and ord$_{126}(43)=3$. Note that indeed, $24,55,91,127$ are not perfect powers. 

\begin{table}
\caption{Primes $1000>p>2m+3$, $k\ge2$, $q\ne m+1$}
$$
\begin{array}{|l|c|c|c|}\hline
m&q&k&p\\
\hline
7&2&9&73\\
23&2&11&89\\
26&3&9&757\\
54&19&3&127\\
90&31&3&331\\
126&43&3&631\\
\hline
\end{array}
$$
\end{table}

\

Using the classification of finite simple groups, we can get $p$-solvability in M. Hall's situation.

\begin{thm}
Let $m>1$ be an  integer and let $p>2$ be a prime.  If $p>2m+3$ and  $G$ is a finite group with $pm+1$ Sylow $p$-subgroups then $G$ is $p$-solvable.
\end{thm}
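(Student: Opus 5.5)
We argue by contradiction, following the scheme of the proof of Theorem \ref{1}. Suppose $G$ is not $p$-solvable. Then some chief factor $K/L$ of $G$ is nonabelian with order divisible by $p$. Such a factor is $K/L\cong S_1\times\cdots\times S_t$ with the $S_i$ isomorphic nonabelian simple groups and $p$ dividing $|S_1|$. Since $\nu_p$ is multiplicative on direct products, Lemma \ref{lemhal} gives $\nu_p(S_1)^t \mid mp+1$. Each $\nu_p(S_1)$ is itself $\equiv 1 \pmod p$ and is larger than $1$, because a simple group of order divisible by $p$ does not have a normal Sylow $p$-subgroup. Hence $\nu_p(S_1)=m'p+1$ for some $m'\ge 1$. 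If $t\ge2$, then $(p+1)^2\le mp+1$, which forces $p\le m-2$ and contradicts $p>2m+3$. So $t=1$, and $\nu_p(S)=m'p+1$ divides $mp+1$ with $S$ simple. Since $(mp+1)/(m'p+1)\equiv 1\pmod p$, either $m'=m$ or the quotient is at least $p+1$, giving $m'p+1\le (mp+1)/(p+1)<m$. The second case is impossible, so $\nu_p(S)=mp+1$ with $1\le m<(p-3)/2$.

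Next we reduce to a cyclic Sylow subgroup of order $p$, exactly as in the sketch of Hall's argument. Since $S$ acts transitively on its $mp+1<p^2$ Sylow $p$-subgroups, $S$ embeds in a symmetric group of degree less than $p^2$. Thus $P\in\Syl_p(S)$ is elementary abelian. For distinct Sylow subgroups $P,Q$ we have $|P:P\cap Q|=p$, and Brodkey's theorem then gives $|P|=p$. (When $m=1$ the same conclusion holds, and in fact that case must be handled as well; for instance $\SL(2,p)$ is not simple, but $\mathrm{PSL}(2,p)$ has $p+1$ Sylow $p$-subgroups. So the hypothesis $m>1$ is essential, and the case $m'=1<m$ is already excluded above.)

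We now apply Brauer's theorem (Theorem 2 of \cite{br}) as in the proof of Theorem \ref{1}: $m=\frac{hup+u^2+u+h}{u+1}$ for positive integers $u,h$. Since $\frac{u}{u+1}\ge\frac12$, this gives $m\ge hup/(u+1)\ge p/2$, that is, $p\le 2m$, contradicting $p>2m+3$. This short argument may already suffice. If the precise hypotheses of Brauer's theorem in \cite{br} (for instance, a requirement that $S$ be of a restricted type, or that $|P|=p$ with $P$ self-centralizing) are not satisfied in our situation, the fallback is to use the classification of finite simple groups. We would go through the simple groups with a Sylow $p$-subgroup of order $p$, namely alternating groups $A_n$ with $p\le n<2p$, groups of Lie type in characteristic $p$ of rank one and small field ($\mathrm{PSL}(2,p)$ with $\nu_p=p+1$), groups of Lie type in cross characteristic where $p$ divides a single cyclotomic factor, and sporadic groups. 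In each family we would compute $\nu_p(S)=|S:N_S(P)|$ and show that $\nu_p(S)=mp+1$ forces $m\ge (p-3)/2$ (or $m=1$).

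The main obstacle is this last step. It requires either checking that Brauer's formula applies verbatim to every simple $S$ here, or doing the CFSG case analysis. In the cross-characteristic Lie type case, $|N_S(P)|$ is bounded by $p(p-1)$ while $|S|$ is huge. That makes $m=(\nu_p-1)/p$ enormous, so the inequality $m\ge(p-3)/2$ is easy there. The delicate cases are $A_p$, $A_{p+1}$, $A_{p+2}$ and $\mathrm{PSL}(2,q)$ with $p\mid q\pm1$, where the index must be computed explicitly.
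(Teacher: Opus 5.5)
Your argument is correct, but for this statement it takes a different route from the paper. The paper passes to a nonabelian simple composition factor $S$ with $p\mid |S|$, notes that $\nu_p(S)$ divides $pm+1$ and so $\nu_p(S)<p^2$, and then cites a CFSG-based result (Theorem 3.1 of \cite{gls}) for the contradiction.

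You instead rerun the Hall--Brodkey--Brauer argument that the paper itself uses in the proof of Theorem~\ref{1}:
\begin{itemize}
\item the divisibility/congruence step forces $t=1$ and $\nu_p(S)=mp+1$;
\item embedding $S$ in a symmetric group of degree $<p^2$, together with Brodkey's theorem, gives $|P|=p$;
\item Brauer's formula with $u,h\ge 1$ then gives $m\ge (p+3)/2$ (the minimum is at $u=h=1$), i.e.\ $p\le 2m-3$, which contradicts $p>2m+3$.
\end{itemize}
Your route avoids the classification entirely and shows that $p$-solvability is already implicit in Hall's original argument. The paper's route is shorter and needs only $\nu_p(S)<p^2$, not the exact equality $\nu_p(S)=mp+1$ or the reduction to $|P|=p$. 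It also does not depend on the precise statement of Brauer's 1942 theorem.

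You should make one point explicit. Brauer's formula with positive $u,h$ cannot hold in two cases:
\begin{itemize}
\item $\mathrm{PSL}(2,p)$, where $m=1$;
\item $\mathrm{PSL}(2,2^k)$ with $p=2^k+1$, where $\nu_p=1+p(p-3)/2$, so $m=(p-3)/2$. For example, $\mathrm{PSL}(2,16)$ has $120=1+7\cdot 17$ Sylow $17$-subgroups, with $17=2\cdot 7+3$.
\end{itemize}
These must be the exceptional cases of Brauer's theorem. Your hypotheses $m>1$ and $p>2m+3$ exclude exactly these two cases, which is why Hall's bound $p>2m+3$ is needed. You mention only the first. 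Once this is noted, the CFSG fallback and the ``main obstacle'' at the end of your proposal are unnecessary, since the paper applies Brauer's theorem in exactly this situation.
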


\begin{proof}
By way of contradiction,  suppose that $G$ is not $p$-solvable. Let $S$ be a non-abelian simple composition factor of $G$ of order divisible by $p$, so that by Lemma \ref{lemhal}, $\nu_p(S)$ is a divisor of $pm+1$. In particular, $\nu_p(S)<p^2$. We obtain a contradiction from Theorem 3.1 of \cite{gls}.  
\end{proof}

{\bf Acknowledgment.} We would like to thank the referee for a careful reading of the paper. His comments have improved the final presentation.

\end{document}